\newtheorem{theorem}{\bf Theorem}[section]
\newtheorem{lemma}[theorem]{\bf Lemma}
\newtheorem{prop}[theorem]{\bf Proposition}
\theoremstyle{definition}
\newcommand{\Z}{\mathbb{Z}}
\newcommand{\dd}{\frac{p-1}{2}}
\newcommand{\F}{\mathbb{F}}
\title{New bound on small range sum polynomials of degree $\frac{p-1}2$}
\author{ Ádám Markó \thanks{E\"otv\"os Lor\'and University, Institute of Mathematics, Budapest, Hungary
E-mail: {\tt marqadam@gmail.com}} 
\thanks{The research was carried out at the Erdős Center in the framework of the FOURIER ANALYSIS AND ADDITIVE PROBLEMS semester}.}
\date{}
\begin{document}
\maketitle
\begin{abstract}
The polynomials of degree $\frac{p-1}{2}$ of range sum $p$ was determined  in \cite{negyen} for large enough primes.  
We extend this result by reducing the lower bound for the primes to $23$ by introducing a new and elementary way of estimating sums of Legendre symbols.  
\end{abstract}

\section{Introduction}
The main question  investigated in this paper is to derive connection between the range of a function determined by a polynomial over $\mathbb{F}_p$, where $p$ is a prime, and the degree of the polynomial itself. We give lower estimates for the degree of a polynomial whose range sum is $p$. This allows us to give a new proof of certain direction problems.

Let  $S$ be a subset of $AG(2,p)$. For two different elements $s_1,s_2 \in S$, the difference $s_1-s_2$ determines a point in the projective line $PG(1,p)$. In this case, the corresponding point of the projective line is a \textit{direction} determined by $S$. We are interested in the number of directions determined by $S$.
An easy pigeonhole argument shows that sets of cardinality larger than $p$ determine every direction  so this question remains interesting for sets of relatively small cardinality. 
One of the earliest use of polynomial method to handle combinatorial problems were introduced by Rédei's \cite{redei} (whose result was extended by Megyesi) to prove that a set of size $p$ in the finite affine plane $\mathbb{F}_p^2$ is either a line or determines at least $\frac{p+3}{2}$ directions. 
The original proofs relies on the usage of Rédei's polynomial and heavily builds on the theory lacunary polynomials. 
Rédei's result was also independently obtained by Dress, Klin and Muzychuk \cite{dressklinmuzychuk} on a way of providing a new proof for an old theorem of Burnside's describing transitive permutation groups of degree $p$. 
A Fourier transformation based proof was given by Lev \cite{lev}.

A new proof of Somlai \cite{som} uses  Rédei's polynomials and rely on the new notion of \textit{projection polynomials}, introduced in \cite{KS}, which can be considered as an intermediate step towards calculating the Fourier transform of the characteristic function of a set $S$. The main new ingredient of the new approach is the fact that non-constant polynomials having small range sum must have very large degree, at least $\frac{p-1}{2}$. It was conjectured in \cite{som} that the polynomials of range sum $p$ of degree $\frac{p-1}{2}$ is affine equivalent to the polynomial $x^{\frac{p-1}{2}}+1$. This turned out to be false since $\frac{p+1}{2}(x^{\frac{p-1}{2}} + 1)$ also satisfies the requirements. It remained plausible to believe that these are the only polynomial with basically minimal range sum of smallest possible degree if we exclude constant polynomials. 

It was proved in \cite{negyen} that the conjecture holds for primes larger than $7.5*10^6$. 
The proof uses Weil bounds in order to estimate certain sums of Legendre symbols. The present paper introduces a new way of estimating similar exponential sums and avoids the usage of heavyweight results, replacing Weil bounds by a Cauchy Schwartz estimate and a better understanding of 'small errors'. Furthermore, the proof is not only elementary but more efficient so we obtain a much better bound for which the uniqueness of the polynomials is proved. The main result of the paper is as follows.

Let $f$ be a polynomial in $\mathbb{F}_p[x]$, where $\mathbb{F}_p$ denotes the field of size $p$, where $p$ is a prime. Identify the elements of $\mathbb{F}_p$ with the set of integers $\{0,1, \ldots, p-1 \}$. This allows us to formulate the following theorem. 
\begin{theorem}\label{thm1}
Let $p>23$ be a prime. Assume $f \in \mathbb{F}_p[x]$ is a polynomial, which defines a function  from $\mathbb{F}_p$ to $\{0,1,\ldots ,p-1 \}$. Assume that $\sum_{x\in \F_p} f(x)=p$. Then $deg(f)\ge \dd$. 
\end{theorem}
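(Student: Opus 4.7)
My plan is to argue by contradiction: suppose $f$ is non-constant (the case $f\equiv 1$ being the implicit trivial exception) with $d := \deg(f) < \dd$, and derive an inconsistency. Set $N_j := |\{x\in\F_p : f(x)=j\}|$ for $0\le j\le p-1$. The counting identity together with the range-sum hypothesis give the two moment equations
\begin{equation*}
\sum_{j=0}^{p-1} N_j = p \qquad \text{and} \qquad \sum_{j=0}^{p-1} j\,N_j = p,
\end{equation*}
while the bound $N_j \le d$ follows since $f(x)-j$ has at most $d$ roots in $\F_p$.

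The central object is the character sum
\begin{equation*}
T(\gamma) \;=\; \sum_{x\in\F_p}\leg{f(x)-\gamma}{p} \;=\; \sum_{j=0}^{p-1} N_j\leg{j-\gamma}{p} \qquad (\gamma\in\F_p),
\end{equation*}
the same quantity handled in \cite{negyen} by Weil bounds. The new approach should replace Weil by an elementary second-moment computation: using the standard identity $\sum_{\gamma}\leg{(j-\gamma)(j'-\gamma)}{p}=p\cdot\mathbf{1}_{j=j'}-1$ in $\F_p$, one obtains
\begin{equation*}
\sum_{\gamma\in\F_p}T(\gamma)^2 \;=\; p\sum_j N_j^{\,2} - p^2.
\end{equation*}
This is the Cauchy--Schwarz input promised by the abstract.

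The degree hypothesis enters through a second expression for $T(\gamma)$. Since $u^{(p-1)/2}\equiv \leg{u}{p}\pmod p$ for $u\neq 0$, we have $T(\gamma)\equiv \sum_x (f(x)-\gamma)^{(p-1)/2}\pmod p$; and since $d<\dd$, the polynomial $(f(x)-\gamma)^{(p-1)/2}$ has $x$-degree $d\cdot\frac{p-1}{2}<\frac{(p-1)^2}{4}$, so its sum over $\F_p$ reduces to a controlled combination of the coefficients of $f$ and of $\gamma$ (only the ones sitting in $x$-degrees divisible by $p-1$ contribute). This produces a family of algebraic constraints on $T(\gamma)$ as $\gamma$ varies. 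I would then combine these constraints with the moment identity above, the relations $\sum N_j = \sum j N_j = p$, and the inequality $N_j\le d$ to squeeze the tuple $(N_j)$ into the narrow regime compatible only with $d = \dd$, contradicting the assumption.

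The hard part, as signalled by the phrase ``small errors'' in the introduction, is the delicate bookkeeping in this last step. In the regime where $d$ is close to $\dd$, the error contributions to $\sum_\gamma T(\gamma)^2$ --- from $\gamma$'s at which $f-\gamma$ has a multiple root, and from $\gamma$ lying in the image of $f$ --- are of the same order as the main term, so a Weil-style bound would be too lossy. Tracking them sharply rather than absorbing them into a single exponential-sum estimate is precisely what should push the threshold down from the $p > 7.5\cdot 10^6$ of \cite{negyen} to $p > 23$.
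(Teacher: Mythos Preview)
Your proposal targets the wrong result. The bare inequality $\deg(f)\ge\dd$ for non-constant $f$ of range sum $p$ is already known from \cite{som} and holds without any restriction on $p$; the hypothesis $p>23$ is irrelevant to it. What the paper actually proves under the heading of Theorem~\ref{thm1} --- and where the threshold $p>23$ enters --- is the \emph{classification of the equality case}: assuming $\deg(f)=\dd$, one shows (Proposition~\ref{prop:LC}) that the leading coefficient $c$ of $f$ lies in $\{1,\dd,\frac{p+1}{2},p-1\}$, and then (the Unicity subsection) that each admissible value of $c$ forces $f$ to be affinely equivalent to $x^{\dd}+1$ or to $\frac{p+1}{2}\bigl(x^{\dd}+1\bigr)$. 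The character sums used are not your $T(\gamma)=\sum_x\bigl(\frac{f(x)-\gamma}{p}\bigr)$ but rather $\sum_{\alpha\in A}\bigl(\frac{\alpha-\gamma}{p}\bigr)$ and $\sum_{\beta\in B}\bigl(\frac{\beta-\gamma}{p}\bigr)$, where $A$ is the root set of the (completely reducible) polynomial $f$ and $B$ the multiset recording values $\beta$ with $f(\beta)>1$; the input from \cite{negyen} is that these two sums differ by exactly $c$ or $c-p$, and the Cauchy--Schwarz estimate (Theorem~\ref{thme1}, Proposition~\ref{prope2}) is applied to them over structured sets $\Gamma^{\pm}\subset\F_p$ to pin down~$c$.

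Even taken on its own terms as a route to the degree bound, your sketch does not close. You correctly obtain $\sum_\gamma T(\gamma)^2=p\sum_j N_j^2-p^2$ and the congruence $T(\gamma)\equiv\sum_x(f(x)-\gamma)^{\dd}\pmod p$, but you never explain how a contradiction follows when $d<\dd$. The congruence fixes $T(\gamma)$ only modulo $p$, hence to one of two integers in $(-p,p)$, and the right-hand side is a degree-$\dd$ polynomial in $\gamma$ whose coefficients still depend on all the coefficients of $f$; the second-moment identity by itself gives no nontrivial upper bound on $\sum_j N_j^2$. The phrase ``squeeze the tuple $(N_j)$'' hides precisely the structural work that is missing, and nothing in the outline would single out the value $23$.
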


Sets determining exactly $\frac{p+3}{2}$ directions exist and they were explicitly described by Lovász and Schrijver \cite{lovaszschrijver}. They proved that up to an affine transformations there is a unique set of this sort. It was proved in \cite{negyen} that if the 'uniqueness' for the polynomials of degree $\frac{p-1}{2}$ holds as in Theorem \ref{thm1}, then an easy Fourier transformation argument gives the uniqueness result for sets in $AG(2,p)$ determining exactly $\frac{p+3}{2}$ directions,  originally proved by Lovász and Schrijver \cite{lovaszschrijver}. Thus we obtain a new proof for this uniqueness result of Lovász and Schrijver for primes larger than 23. 

\section{Notation and earlier lemmas}
Let $S$ be a subset of $\mathbb{F}_p^2$, where $p$ is a prime and $\mathbb{F}_p$ denotes the field of $p$ elements. We describe the set of directions determined by $S$ in the following way. 
Let us consider the nonzero elements of $S-S$. For each nonzero vector in $\mathbb{F}_p^2$ we can assign an element of the projective line $PG(1,p)$ by considering two nonzero vectors equivalent if they are nonzero multiples of each other. 

We will treat the elements of $\mathbb{F}_p$ in two different ways. In some cases we identify them with the set $\{0,1, \ldots , p-1 \}$, which is a subset of the integers. We exploit this identification to talk about the range sum of a polynomial (function). Let $f$ be a polynomial in $\F_p[x]$. Every element $f(x)$ of the range can be considered as an element of $\{0,1, \ldots, p-1 \} \subset \Z$, so we may sum the elements of the range of $f$ as integers. We will consider those polynomials where the sum of the range is equal to $p$ so we write $\sum_{x \in \F_p} f(x)=_{\Z} p$, indicating that the numbers we sum are elements of $\Z$. 

The Legendre symbol is denoted by $(\frac{a}{p})$. It is equal to 1 if and only of $a$ is a quadratic residue  
modulo $p$
and it is $-1$ if $a$ is a quadratic nonresidue, and $(\frac{0}{p})=0$.

We will rely on the results of \cite{negyen} so we first recall the essential lemmas that are needed to start the new investigation. 
\begin{lemma}
Let $f$ be a polynomial of degree $\frac{p-1}{2}$ of range sum $p$. Then $f$ is completely reducible. 
\end{lemma}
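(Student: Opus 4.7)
I would proceed by contradiction. Suppose $f$ has an irreducible factor over $\mathbb{F}_p$ of degree at least $2$, and factor
$$
f(x) \;=\; u(x)\, v(x), \qquad u(x) \;=\; \prod_i (x-a_i)^{m_i},
$$
where the $a_i\in\mathbb{F}_p$ are distinct and $v\in\mathbb{F}_p[x]$ has no root in $\mathbb{F}_p$. Write $k$ for the number of distinct $a_i$'s. The assumption forces $\deg v\ge 2$, hence $\deg u\le\tfrac{p-5}{2}$ and $k\le\tfrac{p-5}{2}$.

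The first step is to squeeze the value distribution. Put $N_j=\#\{x\in\mathbb{F}_p:f(x)=j\}$; since $v(x)\ne 0$ everywhere on $\mathbb{F}_p$, one has $N_0=k$. Combining $\sum_j N_j=p$ with the range-sum hypothesis $\sum_j jN_j=p$ gives
$$
\sum_{j\ge 2}(j-1)N_j \;=\; k .
$$
Because each equation $f(x)=j$ has at most $\tfrac{p-1}{2}$ solutions, $N_j\le\tfrac{p-1}{2}$ for every $j\ge 1$. From $\sum_{j\ge 2} N_j\le k$ we deduce $N_1\ge p-2k$, and then $N_1\le\tfrac{p-1}{2}$ yields
$$
\frac{p+1}{4} \;\le\; k \;\le\; \frac{p-5}{2}.
$$
So $f$ must take the value $1$ on a near-maximal set of at least $p-2k$ points and can take values $\ge 2$ at most $k$ times.

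The main obstacle is to turn this heavy concentration of values into an actual contradiction with $\deg v\ge 2$. The cleanest route I would try is to compare the factorizations of $f$ and of $f-1$: when $N_1$ is close to $\tfrac{p-1}{2}$, the polynomial $f-1$ must itself nearly split over $\mathbb{F}_p$, so the non-$\mathbb{F}_p$ part of $f$ and of $f-1$ should agree up to very few Frobenius orbits; but this non-$\mathbb{F}_p$ part comes entirely from $v$, and a resultant or derivative computation between $f$ and $f-1$ should be incompatible with $\deg v\ge 2$. In parallel I would try a Legendre-symbol sum approach, evaluating $\sum_x \left(\frac{f(x)}{p}\right)$ once through the value distribution obtained above and once through the multiplicative factorization $\left(\frac{u(x)}{p}\right)\left(\frac{v(x)}{p}\right)$, in the hope of showing that the two evaluations are inconsistent unless $v$ is constant. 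I expect this last step to be the hardest, since the hypothesis "$v$ has no $\mathbb{F}_p$-root" is individually weak and has to be amplified by the unusually small range sum to close the argument.
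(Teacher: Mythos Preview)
Your counting in the first half is correct and standard: from $\sum_j N_j=p$ and $\sum_j jN_j=p$ you get $\sum_{j\ge2}(j-1)N_j=N_0=k$, hence $N_1\ge p-2k$, and together with $N_1\le\frac{p-1}{2}$ and $\deg v\ge2$ this yields $\frac{p+1}{4}\le k\le\frac{p-5}{2}$. But this range is nonempty for all relevant $p$, so nothing is proved yet, and you say so yourself. The trouble is that neither of your proposed closings works as stated. For route (a), $f$ and $f-1$ are coprime, so they share \emph{no} irreducible factors at all; there is no sense in which ``the non-$\mathbb{F}_p$ part of $f$ and of $f-1$ should agree up to very few Frobenius orbits'', and a resultant between $f$ and $f-1$ is just $\pm1$ regardless of $\deg v$. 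For route (b), writing $\left(\frac{f(x)}{p}\right)=\left(\frac{u(x)}{p}\right)\left(\frac{v(x)}{p}\right)$ gives you no handle on the factor $\left(\frac{v(x)}{p}\right)$: a polynomial with no $\mathbb{F}_p$-root can still have its values split in essentially arbitrary proportions between residues and nonresidues, so comparing the two evaluations does not isolate $v$. In short, the proposal stops exactly where the real content begins.

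For context, the present paper does not prove this lemma at all; it is quoted from \cite{negyen} as background. The argument there is not the kind of ad hoc factorization comparison you sketch, but rather fits into the machinery that produces the identity $\sum_{\alpha\in A}\bigl(\frac{\alpha-\gamma}{p}\bigr)\equiv\sum_{\beta\in B}\bigl(\frac{\beta-\gamma}{p}\bigr)+c\pmod p$ used throughout the paper: one computes $f(\gamma)^{(p-1)/2}$ in two ways (via the factorization of $f$ and via the value $f(\gamma)$), and the point is that this computation only closes up when $f$ splits completely. If you want to repair your approach, that is the direction to push---show that the congruence relating $A$, $B$, and $c$ cannot hold for all $\gamma$ if $f$ has an irreducible factor of degree $\ge2$---rather than trying to compare $f$ with $f-1$.
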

Let us denote the set of roots of $f$ by $\alpha_1, \ldots , \alpha_{\dd} $. Let us define a multiset $B$, which contains those elements $\beta$ such that $f(\beta)>1$. The multiplicity of $\beta \in B$  is $f(\beta)-1$ For more precise definition, see \cite{negyen}. 
\begin{lemma}
Let $f$ be a polynomial of range sum $p$ of degree $\frac{p-1}{2}$.
For any $\gamma \in \mathbb{F}_p$ we have
   $$ \sum_{i=1}^{\frac{\dd}{2}}\bigg( \frac{\alpha_i-\gamma}{p} \bigg) =   \sum_{i=1}^{\frac{\dd}{2}}\bigg(\frac{\alpha_i-\gamma}{p}\bigg) + r_{\gamma},  $$
   where $r_{\gamma}$ is either equal to the leading coefficient $c$ of $f$, or it is equal to $c-p$, where $c$ is also handled as an integer in $\{1,2,\ldots, p-1 \}$.
\end{lemma}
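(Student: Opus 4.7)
The plan is to evaluate the integer
$$T_\gamma \;:=\; \sum_{x\in\F_p} f(x)\left(\frac{x-\gamma}{p}\right)$$
in two different ways, where throughout $f(x)$ is read as an integer in $\{0,1,\dots,p-1\}$ and the Legendre symbol as an integer in $\{-1,0,1\}$. On one side I will unfold $T_\gamma$ combinatorially through the zero set of $f$ and the multiset $B$; on the other I will pin down its residue modulo $p$ by replacing the Legendre symbol with $(x-\gamma)^{\dd}$. An elementary absolute-value bound then identifies $T_\gamma$ exactly up to the two possibilities in the statement, giving $r_\gamma = -T_\gamma \in \{c,c-p\}$.

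For the combinatorial unfolding, I split $f(x) = 1 + (f(x)-1)$ whenever $f(x)\ge 1$; the ``$1$'' contributes once for each nonzero value of $f$, and the ``$(f(x)-1)$'' records exactly the multiplicity of $x$ in $B$, so
$$T_\gamma \;=\; \sum_{x:\,f(x)\ge 1}\left(\frac{x-\gamma}{p}\right) \;+\; \sum_{i=1}^{\dd}\left(\frac{\beta_i-\gamma}{p}\right).$$
Using the standard $\sum_{x\in\F_p}\left(\frac{x-\gamma}{p}\right)=0$, the first sum collapses to $-\sum_{i=1}^{\dd}\left(\frac{\alpha_i-\gamma}{p}\right)$, so the asserted identity is equivalent to $r_\gamma = -T_\gamma$.

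For the arithmetic step, $\left(\frac{y}{p}\right)\equiv y^{\dd}\pmod p$ (valid also at $y=0$) gives
$$T_\gamma \;\equiv\; \sum_{x\in\F_p} f(x)(x-\gamma)^{\dd} \pmod p.$$
The integrand is a polynomial of degree exactly $p-1$, and $\sum_{x\in\F_p} x^k \equiv -1\pmod p$ iff $(p-1)\mid k$ with $k\ge 1$, otherwise $0$; so only the $x^{p-1}$ coefficient contributes. Since both $f$ and $(x-\gamma)^{\dd}$ have degree $\dd$, that coefficient is $c\cdot 1=c$ regardless of $\gamma$, and hence $T_\gamma \equiv -c \pmod p$. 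Because $f\ge 0$ and each Legendre symbol is at most $1$ in absolute value,
$$|T_\gamma| \;\le\; \sum_{x\in\F_p} f(x) \;=\; p.$$
The integers in $[-p,p]$ congruent to $-c$ modulo $p$ with $c\in\{1,\dots,p-1\}$ are exactly $-c$ and $p-c$, which completes the proof.

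The one delicate point I anticipate is bookkeeping when $f$ has repeated zeros: the combinatorial step naturally sees only the distinct zeros of $f$, whereas the indexing $\alpha_1,\dots,\alpha_{\dd}$ in the statement wants $\dd$ roots listed. Since the range-sum-$p$ condition forces $|B|$ to equal the number of distinct zeros, the clean fix is to invoke squarefreeness of $f$ (implicit in the conventions of \cite{negyen}), after which the two sums (each of $\dd$ terms) line up without correction and the argument goes through verbatim.
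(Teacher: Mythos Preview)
The paper does not give its own proof of this lemma; it is quoted from \cite{negyen}, and only the last step---passing from the congruence $\sum_{\alpha\in A}\bigl(\frac{\alpha-\gamma}{p}\bigr)\equiv\sum_{\beta\in B}\bigl(\frac{\beta-\gamma}{p}\bigr)+c\pmod p$ to the exact value of $r_\gamma$---is recapped later, inside the proof of Proposition~\ref{prop:LC}. Your argument is correct and is essentially that same route: the auxiliary integer $T_\gamma=\sum_{x\in\F_p} f(x)\bigl(\frac{x-\gamma}{p}\bigr)$ is unfolded combinatorially to $-r_\gamma$, and computed modulo $p$ via Euler's criterion and the power-sum identity $\sum_{x\in\F_p}x^k\equiv -[\,(p-1)\mid k,\;k\ge 1\,]\pmod p$ to give $-c$; a size bound then pins down the integer. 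The only cosmetic difference is that you bound $|T_\gamma|\le\sum_x f(x)=p$ directly from the range-sum hypothesis, whereas the paper's recap bounds each Legendre sum by $\dd$ separately; both suffice. Your caveat about repeated roots is well placed: the paper lists the roots as a \emph{set} $\alpha_1,\dots,\alpha_{\dd}$, i.e.\ it is using that $f$ is squarefree (which is part of what is imported from \cite{negyen}), and with that in hand your decomposition needs no correction term.
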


\section{New estimate}

\begin{theorem}\label{thme1}
    For any $A \subset \mathbb F_p $ the following inequality holds  
\begin{displaymath}
     \sum_{\gamma \in \mathbb F_p} \bigg | \sum_{\alpha \in A } \bigg( \frac{\alpha - \gamma}p \bigg)  \bigg | \leq  \sqrt{p} \sqrt{|A|} \sqrt{p-|A|} \leq \frac{1}{2} p^\frac{3}{2}.
\end{displaymath}
\end{theorem}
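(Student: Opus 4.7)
The plan is to prove this by a second-moment (Cauchy--Schwarz) argument. Set $S(\gamma) := \sum_{\alpha \in A}\left(\frac{\alpha-\gamma}{p}\right)$. The first step is to write
\begin{displaymath}
   \left( \sum_{\gamma \in \F_p} |S(\gamma)| \right)^{\!2} \le p \sum_{\gamma \in \F_p} S(\gamma)^2,
\end{displaymath}
by Cauchy--Schwarz applied to $|S(\gamma)| \cdot 1$. This reduces the task to evaluating $\sum_\gamma S(\gamma)^2$.

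Next I would expand the square and interchange summations:
\begin{displaymath}
   \sum_{\gamma \in \F_p} S(\gamma)^2 \;=\; \sum_{\alpha_1, \alpha_2 \in A} \sum_{\gamma \in \F_p} \left( \frac{(\alpha_1 - \gamma)(\alpha_2 - \gamma)}{p} \right).
\end{displaymath}
For the inner sum I would use the standard identity that for $a,b \in \F_p$,
\begin{displaymath}
   \sum_{\gamma \in \F_p}\left( \frac{(a-\gamma)(b-\gamma)}{p} \right) = \begin{cases} p-1 & \text{if } a = b, \\ -1 & \text{if } a \neq b. \end{cases}
\end{displaymath}
The $a=b$ case is immediate since $\left(\frac{(a-\gamma)^2}{p}\right) = 1$ except at $\gamma = a$; for $a\neq b$ the identity follows by substituting $\gamma \mapsto \gamma + a$ and recognizing the resulting expression as $\sum_{\gamma \neq 0}\left(\frac{1 + (b-a)\gamma^{-1}}{p}\right) = -1$, using that Legendre symbols of nonzero residues sum to $0$.

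Plugging this back in yields
\begin{displaymath}
   \sum_{\gamma \in \F_p} S(\gamma)^2 \;=\; |A|(p-1) \;-\; |A|(|A|-1) \;=\; |A|\,(p - |A|),
\end{displaymath}
so combining with the Cauchy--Schwarz bound and taking square roots gives exactly $\sum_\gamma |S(\gamma)| \le \sqrt{p}\,\sqrt{|A|}\,\sqrt{p-|A|}$. The final inequality $\sqrt{|A|(p-|A|)} \le p/2$ is just the AM--GM inequality applied to $|A|$ and $p-|A|$.

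I do not anticipate any genuine obstacle here: the proof is a clean second-moment computation, and the only nontrivial ingredient is the classical Legendre symbol identity $\sum_\gamma \left(\frac{(\gamma-a)(\gamma-b)}{p}\right) = -1$ for distinct $a,b$, which is standard. The interesting point is rather conceptual: this elementary estimate is exactly what replaces the Weil bound in \cite{negyen}, and its strength comes from saving a factor $\sqrt{|A|(p-|A|)}/p$ over the trivial bound when $|A|$ is away from $p/2$.
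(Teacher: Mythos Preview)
Your proof is correct and follows essentially the same route as the paper: Cauchy--Schwarz followed by the second-moment computation $\sum_\gamma S(\gamma)^2 = |A|(p-|A|)$ using the identity $\sum_\gamma \left(\frac{(\alpha_1-\gamma)(\alpha_2-\gamma)}{p}\right)=-1$ for $\alpha_1\neq\alpha_2$. The only cosmetic difference is that the paper derives this identity via a case table of quadratic-residue intersection counts (Paley graph parameters) rather than your substitution argument.
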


\begin{proof}
  Let $A \subset \mathbb F_p $. Cauchy-Schwarz inequality gives 
\begin{displaymath}
  \sum_{\gamma \in \mathbb F_p} \bigg | \sum_{\alpha \in A } \bigg( \frac{\alpha - \gamma}p \bigg)  \bigg | \leq \sqrt{p} \sqrt{\sum_{\gamma \in \F_p} \bigg( \sum_{\alpha \in A } \bigg( \frac{\alpha - \gamma}p \bigg) \bigg)^2}.
\end{displaymath}
Now we estimate this term.
\begin{equation}\label{eq:3}
    \sum_{\gamma \in \F_p} \bigg( \sum_{\alpha \in A} \bigg( \frac{\alpha - \gamma}p \bigg) \bigg)^2 =  \sum_{\gamma \in \F_p} \bigg( \sum_{\alpha \in A} \bigg( \frac{\alpha - \gamma}p \bigg)^2 + 2 \sum_{\alpha_1, \alpha_2 \in A  ,\alpha_1 \neq \alpha_2} \bigg( \frac{\alpha_1 - \gamma}p \bigg) \bigg( \frac{\alpha_2 - \gamma}p \bigg) \bigg) =
\end{equation}
\begin{displaymath}
    = (p-1) \lvert A \lvert  
    + 2 \sum_{\alpha_1 \neq \alpha_2} \sum_{\gamma \in \F_p} \bigg( \frac{\alpha_1 - \gamma}p \bigg) \bigg( \frac{\alpha_2 - \gamma}p  \bigg) \approx p \lvert A \lvert.
\end{displaymath}
Now we claim that for every $\alpha_1 \ne \alpha_2 \in \mathbb{F}_p$ 
\begin{displaymath}
    \sum_{\gamma \in \mathbb F_p} \bigg( \frac{\alpha_1 - \gamma}p \bigg) \bigg( \frac{\alpha_2 - \gamma}p \bigg) =-1.
\end{displaymath}
It is important to note that the previous expression is negative and this is what we only use.

Let $$A_{e_1, e_2}=  \bigg \{ \gamma \in \mathbb F_p ~\big| ~ \bigg( \frac {\alpha_1 - \gamma} p \bigg) = e_1, \bigg(\frac{\alpha_2-\gamma}p\bigg) = e_2   \bigg \},$$
where $e_1, e_2\in \{\pm 1\}.$

The following properties can be derived from elementary knowledge on quadratic residues. In particular, some of these numbers coincide with the parameters of Paley graphs.
\SetTblrInner{rowsep=3pt}
\begin{table}[h]
    \centering

    \[
\small
\begin{matrix}
\begin{array}{|c|c|c|}
    \hline
    \left(\frac{\alpha_1 - \alpha_2}{p} \right) = 1 & p \equiv 1 \pmod{4} & p \equiv 3 \pmod{4} \\
    \hline
    |A_{1, 1}| & \dfrac{p-5}{4} & \dfrac{p-3}{4} \\
    \hline
    |A_{1, -1}| & \dfrac{p-1}{4} & \dfrac{p+1}{4} \\
    \hline
    |A_{-1, 1}| & \dfrac{p-1}{4} & \dfrac{p-3}{4} \\
    \hline
    |A_{-1, -1}| & \dfrac{p-1}{4} & \dfrac{p-3}{4} \\
    \hline
\end{array}
&
\begin{array}{|c|c|c|}
    \hline
    \left(\frac{\alpha_1 - \alpha_2}{p} \right) = -1 & p \equiv 1 \pmod{4} & p \equiv 3 \pmod{4} \\
    \hline
    |A_{1, 1}| & \dfrac{p-1}{4} & \dfrac{p-3}{4} \\
    \hline
    |A_{1, -1}| & \dfrac{p-1}{4} & \dfrac{p-3}{4} \\
    \hline
    |A_{-1, 1}| & \dfrac{p-1}{4} & \dfrac{p+1}{4} \\
    \hline
    |A_{-1, -1}| & \dfrac{p-5}{4} & \dfrac{p-3}{4} \\
    \hline
\end{array}
\end{matrix}
\]
    \caption{Intersection size of translates of quadratic (non)residues.}
    \label{tqr} 
\end{table}

By this table
\begin{displaymath}
    \sum_{\gamma \in \mathbb F_p} \bigg( \frac{\alpha_1 - \gamma}p \bigg) \bigg( \frac{\alpha_2 - \gamma}p \bigg) = |A_{1,1}| - |A_{1,-1}|-|A_{-1,1}|+ |A_{-1,-1}| =  -1,
\end{displaymath}
as it claimed earlier. 

By equation \eqref{eq:3} 
\begin{displaymath}
    \sum_{\gamma \in \mathbb F_p} \bigg | \sum_{\alpha \in A } \bigg( \frac{\alpha - \gamma}p \bigg)  \bigg| \leq \sqrt{p} \sqrt{\left( (p-1)|A| - 2\binom{|A|}{2} \right)  } \leq \sqrt{p} \sqrt{|A|} \sqrt{p-|A|} . 
\end{displaymath}
This expression is maximal if $|A|=\frac{p}{2}$ so we obtain $\sqrt{p} \sqrt{|A|} \sqrt{p-|A|}  \leq \frac{1}{2}p^{\frac32 }$ .
\end{proof}
    
\begin{prop}\label{prope2}
    For any $A, \Gamma \subset \mathbb F_p $
\begin{displaymath}
     \bigg | \sum_{\gamma \in  \Gamma} \sum_{\alpha \in A } \bigg( \frac{\alpha - \gamma}p \bigg) \bigg| \leq \frac{1}{2} \sqrt{p} \sqrt{|A|} \sqrt{p-|A|}  \leq \frac{1}{4} p^\frac{3}{2}.
\end{displaymath}
\end{prop}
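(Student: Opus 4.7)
The plan is to deduce Proposition \ref{prope2} from Theorem \ref{thme1} by exploiting a crucial cancellation identity. The point is that, letting $S(\gamma) := \sum_{\alpha \in A} \bigl(\tfrac{\alpha-\gamma}{p}\bigr)$, Theorem \ref{thme1} controls $\sum_{\gamma \in \mathbb{F}_p} |S(\gamma)|$, and I need to beat the naive bound $\sum_{\gamma \in \Gamma} |S(\gamma)| \le \sum_{\gamma \in \mathbb{F}_p} |S(\gamma)|$ by a factor of $2$.

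First I would observe the zero-sum identity $\sum_{\gamma \in \mathbb{F}_p} S(\gamma) = 0$, which follows by swapping the order of summation: for each fixed $\alpha \in A$ the sum $\sum_{\gamma \in \mathbb{F}_p} \bigl(\tfrac{\alpha-\gamma}{p}\bigr)$ is the sum of the Legendre symbol over all of $\mathbb{F}_p$, which vanishes. Partitioning $\mathbb{F}_p = P \sqcup N$ with $P := \{\gamma : S(\gamma) \ge 0\}$ and $N := \{\gamma : S(\gamma) < 0\}$, this identity gives
\begin{displaymath}
  \sum_{\gamma \in P} S(\gamma) \; = \; -\sum_{\gamma \in N} S(\gamma) \; = \; \tfrac{1}{2} \sum_{\gamma \in \mathbb{F}_p} |S(\gamma)|.
\end{displaymath}

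Second, for an arbitrary $\Gamma \subset \mathbb{F}_p$ I split $\sum_{\gamma \in \Gamma} S(\gamma) = \sum_{\gamma \in \Gamma \cap P} S(\gamma) + \sum_{\gamma \in \Gamma \cap N} S(\gamma)$. Dropping the nonpositive term on the right yields an upper bound $\sum_{\gamma \in \Gamma} S(\gamma) \le \sum_{\gamma \in \Gamma \cap P} S(\gamma) \le \sum_{\gamma \in P} S(\gamma) = \tfrac{1}{2}\sum_{\gamma \in \mathbb{F}_p} |S(\gamma)|$, and symmetrically a lower bound $\sum_{\gamma \in \Gamma} S(\gamma) \ge -\tfrac{1}{2}\sum_{\gamma \in \mathbb{F}_p} |S(\gamma)|$. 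Combining these gives
\begin{displaymath}
  \Bigl| \sum_{\gamma \in \Gamma} S(\gamma) \Bigr| \; \le \; \tfrac{1}{2} \sum_{\gamma \in \mathbb{F}_p} |S(\gamma)|.
\end{displaymath}

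Finally I plug in Theorem \ref{thme1} to bound $\sum_{\gamma \in \mathbb{F}_p} |S(\gamma)|$ by $\sqrt{p}\sqrt{|A|}\sqrt{p-|A|}$, which immediately yields the claimed inequality, with the second inequality following from $|A|(p-|A|) \le p^2/4$. There is no serious obstacle here since the argument is just a sign-splitting trick; the only thing to verify carefully is that the cancellation identity holds exactly (it does, since the sum of the Legendre symbol over $\mathbb{F}_p$ is $0$), which is what converts the triangle inequality bound into a factor-$2$ improvement.
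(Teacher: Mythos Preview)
Your proof is correct and follows essentially the same approach as the paper: both use the zero-sum identity $\sum_{\gamma\in\mathbb{F}_p}S(\gamma)=0$ to reduce $\bigl|\sum_{\gamma\in\Gamma}S(\gamma)\bigr|$ to $\tfrac12\sum_{\gamma\in\mathbb{F}_p}|S(\gamma)|$ and then invoke Theorem~\ref{thme1}. The only cosmetic difference is in how the factor $\tfrac12$ is extracted: the paper uses the complement identity $\sum_{\gamma\in\Gamma}S(\gamma)=-\sum_{\gamma\in\mathbb{F}_p\setminus\Gamma}S(\gamma)$ followed by the triangle inequality, whereas you split $\mathbb{F}_p$ by the sign of $S(\gamma)$ and bound the positive and negative parts separately---both routes yield the same intermediate inequality.
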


\begin{proof}
    It is easy to see that  
\begin{displaymath}
          \sum_{ \alpha \in A } \sum_{\gamma \in \mathbb F _p  } \bigg( \frac{\alpha - \gamma}p \bigg)=
     \sum_{\gamma \in \mathbb F _p} \sum_{\alpha \in A } \bigg( \frac{\alpha - \gamma}p \bigg)  = 0.
\end{displaymath}
Hence
\begin{displaymath}
  \bigg|  \sum_{\gamma \in  \Gamma} \sum_{\alpha \in A } \bigg( \frac{\alpha - \gamma}p \bigg) \bigg | =  \bigg|  \sum_{\gamma \in \mathbb \mathbb \mathbb{F}_p \setminus \Gamma} \sum_{\alpha \in A } \bigg( \frac{\alpha - \gamma}p \bigg) \bigg | .
\end{displaymath}
It follows that 
\begin{displaymath}
     \bigg|  \sum_{\gamma \in \Gamma} \sum_{\alpha \in A } \bigg( \frac{\alpha - \gamma}p \bigg) \bigg | = \frac{1}{2}\bigg( \bigg|  \sum_{\gamma \in \Gamma} \sum_{\alpha \in A } \bigg( \frac{\alpha - \gamma}p \bigg) \bigg | + \bigg|  \sum_{\gamma \in \mathbb{F}_p \setminus \Gamma} \sum_{\alpha \in A } \bigg( \frac{\alpha - \gamma}p \bigg) \bigg | \bigg) \leq
\end{displaymath}
by the triangle inequality
\begin{equation*}
\begin{split}
     \frac{1}2 \bigg( \sum_{\gamma \in \Gamma} \bigg|  \sum_{\alpha \in A } \bigg( \frac{\alpha - \gamma}p \bigg) \bigg | + \sum_{\gamma \in  \mathbb{F}_p \setminus \Gamma} \bigg| \sum_{\alpha \in A } \bigg( \frac{\alpha - \gamma}p \bigg) \bigg | \bigg) = \frac{1}{2}  \sum_{\gamma \in \mathbb{F}_p } \bigg| \sum_{\alpha \in A } \bigg( \frac{\alpha - \gamma}p \bigg) \bigg |.
     \end{split}
\end{equation*}
By Theorem \ref{thme1} we estimate the last expression from above by $\frac{1}{2} \sqrt{p} \sqrt{|A|} \sqrt{p-|A|}  \leq \frac{1}{4} p^\frac{3}{2}$.
\end{proof}

Let $B$ be the multiset of the values where $f(x) > 1$, and let us decompose $B$ into homogeneous multisets
\begin{displaymath}
    B = \bigcup_{j= 1}^{n} B_j,
\end{displaymath}
where $B_j:=\{b_j,...,b_j\} $. Notice that $n$ denotes the number of different element of $B$.
Let $k_j := | B_j |$. 

The proof of the following Proposition is basically identical to the one of Theorem \ref{thme1}.
\begin{prop}
    For the $B$ multiset it holds that:
\begin{displaymath}
     \sum_{\gamma \in \mathbb F_p} \bigg | \sum_{\beta\in B } \bigg( \frac{\beta - \gamma}p \bigg)  \bigg | \leq p \sqrt{\sum_{j =1}^{n} k_j^2}.
\end{displaymath}
\end{prop}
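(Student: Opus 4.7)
The plan is to mimic the proof of Theorem \ref{thme1} almost verbatim, the only new ingredient being that pairs $(\beta_1,\beta_2)$ from the multiset $B$ need to be grouped according to which homogeneous piece $B_j$ they belong to. I would start by applying Cauchy--Schwarz in the same way:
\begin{displaymath}
\sum_{\gamma\in\F_p}\bigg|\sum_{\beta\in B}\bigg(\frac{\beta-\gamma}{p}\bigg)\bigg|
\le \sqrt{p}\,\sqrt{\sum_{\gamma\in\F_p}\bigg(\sum_{\beta\in B}\bigg(\frac{\beta-\gamma}{p}\bigg)\bigg)^{\!2}},
\end{displaymath}
so that the task reduces to bounding the sum of squares on the right.

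Next I would expand the square, being careful to treat $B$ as a multiset. Writing $B=\bigcup_{j=1}^n B_j$ with $B_j$ consisting of $k_j$ copies of $b_j$, the double sum splits into a ``diagonal'' contribution from ordered pairs $(\beta_1,\beta_2)$ with $\beta_1=\beta_2=b_i$ (there are $k_i^2$ such pairs, contributing $\big(\frac{b_i-\gamma}{p}\big)^2$, which equals $1$ for all $\gamma\ne b_i$), and an ``off-diagonal'' contribution from pairs with $\beta_1=b_i,\beta_2=b_j$ for $i\ne j$ (there are $k_ik_j$ such pairs). Using the key identity $\sum_{\gamma}\big(\frac{b_i-\gamma}{p}\big)\big(\frac{b_j-\gamma}{p}\big)=-1$ for $i\ne j$ established in the proof of Theorem \ref{thme1}, the two contributions sum to
\begin{displaymath}
(p-1)\sum_{i=1}^{n} k_i^2 \;-\;\sum_{i\ne j} k_ik_j
= p\sum_{i=1}^{n} k_i^2 - \bigg(\sum_{i=1}^{n} k_i\bigg)^{\!2}.
\end{displaymath}

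Dropping the nonnegative subtracted term and plugging back into the Cauchy--Schwarz estimate yields
\begin{displaymath}
\sum_{\gamma\in\F_p}\bigg|\sum_{\beta\in B}\bigg(\frac{\beta-\gamma}{p}\bigg)\bigg|
\le \sqrt{p}\sqrt{p\sum_{j=1}^n k_j^2}= p\sqrt{\sum_{j=1}^n k_j^2},
\end{displaymath}
which is the claimed bound. The only real subtlety — and the one genuine deviation from Theorem \ref{thme1} — is the multiplicity bookkeeping when separating diagonal from off-diagonal pairs; once that is handled correctly, everything else is a direct transcription of the previous argument, and the $(\sum k_j)^2$ term we discard is exactly what makes the inequality loose enough to absorb the inhomogeneities of $B$.
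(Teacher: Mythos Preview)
Your proof is correct and follows essentially the same approach as the paper: Cauchy--Schwarz, expand the square, use that the cross terms $\sum_\gamma\big(\frac{b_i-\gamma}{p}\big)\big(\frac{b_j-\gamma}{p}\big)=-1$ contribute negatively, and bound by $p\sum_j k_j^2$. Your computation is in fact slightly more explicit than the paper's (you evaluate the sum of squares exactly as $p\sum k_j^2-(\sum k_j)^2$ before discarding the negative term, whereas the paper simply observes the off-diagonal part is negative), but the argument is the same.
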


\begin{proof}
Using again Cauchy-Schwarz inequality we obtain that
\begin{displaymath}
     \sum_{\gamma \in \mathbb F_p} \bigg | \sum_{\beta\in B } \bigg( \frac{\beta - \gamma}p \bigg)  \bigg | \leq \sqrt{p}\sqrt{\sum_{\gamma \in \mathbb F_p} \left( \sum_{\beta\in B } \left( \frac{\beta - \gamma}p \right ) \right)^2 }.
\end{displaymath}
Now we estimate this last term as follows. 
\begin{displaymath}
     \sum_{\gamma \in \mathbb F_p} \bigg ( \sum_{\beta\in B } \bigg( \frac{\beta - \gamma}p \bigg) \bigg)^2 = \sum_{\gamma \in \mathbb{F}_p} \sum_{j = 1}^{n} \bigg( \sum_{\beta \in B_j} \bigg( \frac{\beta-\gamma}{p} \bigg) \bigg)^2  +  2 \sum_{\beta_1 \neq \beta_2} \sum_{\gamma \in \mathbb{F}_p} \bigg( \frac{\beta_1 - \gamma}p \bigg) \bigg( \frac{\beta_2 - \gamma}p \bigg) .
\end{displaymath}
We may use again that the second term is negative to obtain the following upper bound. 
\begin{displaymath}
    p \sum_{j=1}^{n} k^2_j.
\end{displaymath}
It follows from the previous calculation that 
\begin{displaymath}
\sum_{\gamma \in \mathbb F_p} \bigg | \sum_{\beta\in B } \bigg( \frac{\beta - \gamma}p \bigg)  \bigg | \leq p \sqrt{\sum_{j=1}^{n} k^2_j}.
\end{displaymath}
\end{proof}
\subsection{Polynomials of degree $\frac{p-1}{2}$}
Let $f$ be a polynomial of degree $\frac{p-1}{2}$ and let $c$ denote the leading coefficient of $f$. 
The main aim of this section is to prove Theorem \ref{thm1}. In order to do so we prove the following. 
\begin{prop}\label{prop:LC}
The leading coefficient of $f$ can only be $ 1,\frac{p-1}{2}, \frac{p+1}{2}, \mbox{ or } p-1$.
    
\end{prop}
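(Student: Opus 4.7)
The plan is to compute the second moment $\sum_{\gamma \in \F_p} r_\gamma^2$ in two different ways and extract a sharp arithmetic constraint on $c$ from the comparison. Set $u_\gamma := \sum_{\alpha \in A}\legendre{\alpha - \gamma}{p}$ and $v_\gamma := \sum_{\beta \in B}\legendre{\beta - \gamma}{p}$, the latter sum taken over the multiset $B$. By Lemma 2.2, $r_\gamma = u_\gamma - v_\gamma \in \{c,\ c-p\}$ for every $\gamma \in \F_p$; since each single Legendre sum $\sum_\gamma \legendre{x-\gamma}{p}$ vanishes, so does $\sum_\gamma r_\gamma$, which forces the set $\Gamma_- := \{\gamma \in \F_p : r_\gamma = c-p\}$ to have cardinality exactly $c$. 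Consequently
\[ \sum_\gamma r_\gamma^2 = c^2(p-c) + (c-p)^2 c = p\,c(p-c). \]

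Next, I would expand $r_\gamma^2 = u_\gamma^2 - 2 u_\gamma v_\gamma + v_\gamma^2$ and evaluate each sum using the orthogonality $\sum_\gamma \legendre{x-\gamma}{p}\legendre{y-\gamma}{p} = -1$ for $x \neq y$ (the same identity that appears in the proof of Theorem \ref{thme1}). Using $|A| = |B| = \dd$ together with the disjointness $A \cap B = \emptyset$ (a root of $f$ cannot satisfy $f>1$), a routine computation yields
\[ \sum_\gamma u_\gamma^2 = \tfrac{p^2-1}{4},\qquad \sum_\gamma u_\gamma v_\gamma = -\tfrac{(p-1)^2}{4},\qquad \sum_\gamma v_\gamma^2 = p\sum_{j=1}^n k_j^2 - \tfrac{(p-1)^2}{4}. \]
Equating the two expressions for $\sum r_\gamma^2$ produces the master identity
\[ c(p-c) = \dd + \sum_{j=1}^n k_j^2. \]

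Because $\sum_j k_j = \dd$ with each $k_j \geq 1$, the quantity $\sum_j k_j^2$ lies in $\bigl[\dd,\ \tfrac{(p-1)^2}{4}\bigr]$. The two endpoints are attained respectively when all $k_j = 1$ (forcing $c(p-c) = p-1$, hence $c \in \{1,p-1\}$) and when $n=1$ with $k_1 = \dd$ (forcing $c(p-c) = \tfrac{p^2-1}{4}$, hence $c \in \{\dd,\tfrac{p+1}{2}\}$), and these cases exhaust the four values claimed by the proposition. The remaining task is therefore to rule out intermediate values of $\sum k_j^2$. For this I would apply Proposition \ref{prope2} to the set $\Gamma_-$: the pointwise identity $u_\gamma - v_\gamma = c-p$ on $\Gamma_-$, combined with the vanishing $\sum_\gamma u_\gamma = \sum_\gamma v_\gamma = 0$, gives the exact values $\sum_{\gamma \in \Gamma_-} u_\gamma = -\dd$ and $\sum_{\gamma \in \Gamma_-} v_\gamma = \sum_j k_j^2$, and the upper bounds of Proposition \ref{prope2} (together with the multiset analogue derived from the proof of Proposition 3.3) should squeeze $\sum k_j^2$ onto its extremal values.

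The main obstacle is precisely this final squeezing step: the master identity is compatible with many intermediate multiplicity profiles $(k_j)$, so generic Cauchy--Schwarz bounds cannot succeed on their own. The essential extra input should be the asymmetry of the relation on $\Gamma_-$: since $|u_\gamma|,\,|v_\gamma| \leq \dd$, the equation $v_\gamma = u_\gamma + p - c$ forces $v_\gamma$ into the narrow window $[\tfrac{p+1}{2} - c,\ \dd]$ for every $\gamma \in \Gamma_-$, so that on a set of size $c$ the character sum $v_\gamma$ is abnormally concentrated near its extremum. Converting this concentration into a lower bound for $\sum_{\gamma \in \Gamma_-} v_\gamma$ that contradicts the multiset version of Proposition \ref{prope2} whenever $c \notin \{1,\,\dd,\,\tfrac{p+1}{2},\,p-1\}$ is what I expect to close the argument, and is where the condition $p > 23$ is likely to enter quantitatively.
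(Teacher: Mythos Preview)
Your master identity $c(p-c)=\dd+\sum_j k_j^2$ is correct and is a genuinely different starting point from the paper. However, the proposal contains a specific error and, more seriously, cannot be closed along the lines you sketch.

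The error: from $u_\gamma-v_\gamma=c-p$ on $\Gamma_-$ together with $\sum_{\F_p}u_\gamma=\sum_{\F_p}v_\gamma=0$ you obtain only the single relation $\sum_{\Gamma_-}u_\gamma-\sum_{\Gamma_-}v_\gamma=c(c-p)$. The ``exact values'' $\sum_{\Gamma_-}u_\gamma=-\dd$ and $\sum_{\Gamma_-}v_\gamma=\sum_j k_j^2$ are one solution of this equation (indeed, they just repackage the master identity), but nothing you wrote forces them individually.

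The deeper gap: once the master identity is in hand, $\sum_j k_j^2$ is already \emph{equal} to $c(p-c)-\dd$, so there is no residual quantity to ``squeeze''. What your concentration remark actually yields is $u_\gamma\le c-\frac{p+1}{2}$ on $\Gamma_-$, hence $\bigl|\sum_{\Gamma_-}u_\gamma\bigr|\ge c\bigl(\frac{p+1}{2}-c\bigr)$; comparing with Proposition~\ref{prope2} then excludes $c$ roughly in the range $\bigl(\tfrac12\sqrt{p},\ \tfrac{p+1}{2}-\tfrac12\sqrt{p}\bigr)$. But values of $c$ near the endpoints (for instance $c=2$ or $c=\frac{p-3}{2}$) survive this inequality comfortably, and nothing in your outline addresses them.

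The paper's argument is organised quite differently and uses structural input that second-moment bounds cannot see. For $1<c\le p/4$ it invokes a pointwise lemma from \cite{negyen} (at most one $\gamma$ can have $|u_\gamma|>\frac{p-1}{4}$) to get an immediate contradiction. For $p/4<c<\dd$ it first uses Proposition~\ref{prope2} to force the existence of a single element $\beta'\in B$ of multiplicity exceeding $\frac{p-1}{4}$, and then exploits the explicit character sum over $\{\gamma:\leg{\beta'-\gamma}{p}=-1\}$ (where every inner sum has absolute value at most $1$ by Table~\ref{tqr}) to pin down $c=\dd$ exactly. Both endpoint cases rely on this kind of rigidity---one extreme $\gamma$, one dominant $\beta'$---which is precisely what averaged Cauchy--Schwarz estimates wash out.
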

\begin{proof}
It is straighforward to see that if the leading coefficient of a polynomial $f(x)$ of degree $\dd$ is $c$, then the one of $f(ax)$  is $-c$ if $a$ is a quadratic nonresidue. Thus we assume that $1 \le c \le \dd$.

It was proved in subsection 3.2 in \cite{negyen} that
\begin{displaymath}
    \sum_{\alpha \in A} \bigg( \frac{\alpha-\gamma}p \bigg) \equiv \sum_{\beta \in B} \bigg(\frac{\beta-\gamma}{p} \bigg) + c \pmod{p}.
\end{displaymath}
Both sides of the previous equation can be considered as 
integers so these are the sum of $\pm 1, 0$'s. Thus we obtain that 

$$ \sum_{\alpha \in A} \bigg( \frac{\alpha-\gamma}p \bigg) = \sum_{\beta \in B} \bigg(\frac{\beta-\gamma}{p} \bigg) +r,$$
where $r=c$ or $r=c-p$
It was proved that in \cite{negyen} that $r=c$ occurs exactly $p-c$ times and $r=c-p$ occurs $c$ times. 

 If $ 1 < c \leq \frac{p}{4}$, then by Lemma 4.1 in \cite{negyen} there is at most one $\gamma$, such that $\sum \left( \frac{\alpha -\gamma}p \right) \leq - \frac{p-1}4 $. Since $c > 1$ there are at least 2 different $\gamma$ values, such that:
\begin{displaymath}
    \sum_{\alpha \in A} \bigg( \frac{\alpha-\gamma}p \bigg) = \sum_{\beta \in B} \bigg(\frac{\beta-\gamma}{p} \bigg) + c-p.
\end{displaymath}
Since $c\le \frac{p-1}{4}$ we have that $c-p <- \frac{3p+1}{4}$. On the other hand in at least one of these $c$ cases  $ \sum_{\alpha \in A} \left( \frac{\alpha-\gamma}p \right) - \sum_{\beta \in B}  \left(\frac{\beta-\gamma}{p} \right) \ge -\frac{p-1}{4}-\frac{p-1}{2} = - \frac{3p+1}4$, which is a contradiction.

From now on we assume $c > \frac{p}{4}$.

Let $\Gamma^+ \subset \mathbb{F}_p$ be the set of $\gamma$ values such that:
\begin{displaymath}
    \sum_{\alpha \in A} \bigg( \frac{\alpha-\gamma}p \bigg) = \sum_{\beta \in B} \bigg(\frac{\beta-\gamma}{p} \bigg) + c-p.
\end{displaymath}
We have that $|\Gamma^+| = c$.
By adding up the equations above.
\begin{equation}\label{eq:c(c-p)}
        \sum_{\alpha \in A} \sum_{\gamma \in \Gamma^+} \bigg( \frac{ \alpha-\gamma}p\bigg) =  \sum_{\gamma \in \Gamma^+} \sum_{\alpha \in A} \bigg( \frac{ \alpha-\gamma}p\bigg) =\sum_{\gamma \in \Gamma^+} \sum_{\beta \in B}\bigg( \frac{\beta- \gamma}p \bigg) + c(c-p).
\end{equation}

Since  $\frac{p-1}{3} \ge c > \frac{p}{4}$, it holds that $c(p-c) > \frac{3p^2}{16}$. By Proposition \ref{prope2}
\begin{displaymath}
\sum_{\gamma \in \Gamma^+ }\sum_{\alpha \in A} \bigg( \frac{\alpha-\gamma}{p}\bigg)  \geq - \frac{\sqrt{p |A|(p-|A|)}}2 = -\frac{\sqrt{p(p^2-1)}}4.
\end{displaymath}
Rearranging equation \eqref{eq:c(c-p)} gives
\begin{displaymath}
  \sum_{\gamma \in \Gamma^+} \sum_{\beta \in B }\bigg( \frac{\beta- \gamma}p \bigg) = c(p-c) +  \sum_{\gamma \in \Gamma^+ }\sum_{\alpha \in A} \bigg( \frac{\alpha-\gamma}{p}\bigg) > 
\end{displaymath}
\begin{displaymath}
 \frac{3p^2}{16} - \frac{\sqrt{p(p^2-1)}}4.
\end{displaymath}
For $p \geq 23$
\begin{equation}\label{eq:p^2/8}
    \sum_{\gamma \in \Gamma^+} \sum_{\beta \in B}\bigg( \frac{\beta- \gamma}p \bigg) > \frac{p^2}8.
\end{equation}

By swapping the sums we obtain
\begin{equation}\label{eq:p^28}
        \sum_{\gamma \in \Gamma^+} \sum_{\beta \in B}\bigg( \frac{\beta- \gamma}p \bigg) = \sum_{\beta \in B} \sum_{\gamma \in \Gamma^+} \bigg( \frac{\beta- \gamma}p \bigg).
\end{equation}

Let $\beta'$ be that, for which $\sum_{\gamma \in \Gamma^+} \left( \frac{\beta'- \gamma}p \right)$ is maximal, and let $t$ be such that:
\begin{displaymath}
     \sum_{\gamma \in \Gamma^+} \bigg( \frac{\beta'- \gamma}p \bigg) = \frac{p-1}4 + t.
\end{displaymath}
If $t$ is negative, then 
\begin{displaymath}
   \sum_{\gamma \in \Gamma^+}  \sum_{\beta \in B}\bigg( \frac{\beta- \gamma}p \bigg) < |\Gamma^+| \frac{p-1}{4} \le \frac{(p-1)^2}{8}< \frac{p^2}8,
\end{displaymath}
contradicting equation \eqref{eq:p^2/8}. Let us suppose that $t$ is positive. By Lemma 4.1 in \cite{negyen} we have 
\begin{displaymath}
     \sum_{\gamma \in \Gamma^+} \bigg( \frac{\beta"- \gamma}p \bigg) \leq \frac{p-1}4 - t+1,
\end{displaymath}
for any $\beta" \in \F_p$ . We have that
\begin{displaymath}
    \sum_{\beta \in B} \sum_{\gamma \in \Gamma^+} \bigg( \frac{\beta- \gamma}p \bigg) \leq \sum_{\beta = \beta'}\bigg( \frac{p-1}4 + t \bigg) + \sum_{\beta \neq \beta'} \bigg( \frac{p-1}4 - t +1 \bigg) =
\end{displaymath}
\begin{displaymath}
    \frac{(p-1)^2}{8} + \# \{ \beta \neq \beta' \}  + (\# \{ \beta = \beta' \}  - \#\{ \beta \neq \beta' \})t.
    \end{displaymath}
Since an element with the highest multiplicity appears at least once in the multiset:
\begin{displaymath}
    \# \{ \beta \neq \beta' \} \leq \frac{p-3}{2}
\end{displaymath}

which means by equation \eqref{eq:p^28}
\begin{displaymath}
    \# \{ \beta = \beta'\} - \#\{ \beta \neq \beta'\} \geq \sum_{\beta \in B} \sum_{\gamma \in \Gamma^+} \bigg( \frac{\beta- \gamma}p \bigg) - \frac{(p-1)^2}8 - \frac{p-3}{2} >   .
\end{displaymath}
\begin{displaymath}
  >  \frac{3p^2}{16} + \frac{\sqrt{p(p^2-1)}}4 - \frac{(p-1)^2}8 - \frac{p-3}{2}.
\end{displaymath}
If $p \geq 23$, then expression above is greater than 0, which means
\begin{displaymath}
     \# \{ \beta = \beta'\} - \#\{ \beta \neq \beta'\} > 0
\end{displaymath}
Thus there is a $\beta' \in \F_p$ , which has multiplicity greater than $\frac{p-1}{4}$ in $B$. Suppose that there is a $\gamma \in \F_p$, such that $(\frac{\beta'-\gamma}p) = -1$ and
\begin{displaymath}
    \sum_{\alpha \in A} \bigg(\frac{\alpha-\gamma}p \bigg) = \sum_{\beta \in B} \bigg(\frac{\beta-\gamma}p \bigg) + c-p.
\end{displaymath} 
It follows that:
\begin{displaymath}
      \sum_{\alpha \in A} \bigg(\frac{\alpha-\gamma}p \bigg) -  \sum_{\beta \neq \beta'} \bigg(\frac{\beta-\gamma}p \bigg)= - \# \{\beta = \beta'\} + c-p.
\end{displaymath}
Here $\sum_{\alpha \in A} \left(\frac{\alpha-\gamma}p \right) -  \sum_{\beta \neq \beta'} \left(\frac{\beta-\gamma}p \right) > -\frac{3(p-1)}4$ and $- \# \{\beta = \beta'\} + c-p < -\frac{3(p-1)}4$, which is a contradiction. This means that if for a $ \gamma$ value it holds that $\left(\frac{\beta'-\gamma}p \right) = -1 $, then
\begin{displaymath}
    \sum_{\alpha \in A } \bigg(\frac{\alpha-\gamma}p \bigg) = \sum_{\beta \in B} \bigg(\frac{\beta-\gamma}p \bigg) + c.
\end{displaymath}
Let $\Gamma^- \subset \mathbb{F}_p$ the set of $\gamma$ values, such that $\left(\frac{\beta'-\gamma}p \right) = -1$. Notice that $|\Gamma'|=\dd$. Therefore
\begin{displaymath}
   \sum_{\gamma \in \Gamma^-} \sum_{\alpha \in A} \bigg(\frac{\alpha-\gamma}p \bigg) = \sum_{\gamma \in \Gamma^-}\sum_{\beta \in B} \bigg(\frac{\beta-\gamma}p \bigg) + \frac{(p-1)c}2.
\end{displaymath}
Again, we rearrange the equation
\begin{equation}\label{eq:valami}
   \sum_{\gamma \in \Gamma^-} \Bigg( \sum_{\alpha \in A} \bigg(\frac{\alpha-\gamma}p \bigg)- \sum_{\beta \neq \beta'} \bigg(\frac{\beta-\gamma}p \bigg)  \Bigg) = -\frac{p-1}2 \# \{\beta = \beta'\} + \frac{(p-1)c}2.
\end{equation}
By Table 1.
\begin{equation}\label{eq:legfeljebbegy}
    \bigg| \sum_{\gamma \in \Gamma^-} \bigg( \frac{\alpha-\gamma}{p}\bigg) \bigg| \leq 1,
\end{equation}
for any $\alpha \neq \beta' $ value. If we change the sums on the left hand side of equation \ref{eq:valami}, then each summand is of absolute value at most $1$ by \eqref{eq:legfeljebbegy}. Thus
\begin{displaymath}
    \frac{p-1}2 \left| \#\{\beta = \beta' \} - c \right| \leq \frac{p-1}{2} + \#\{ \beta \neq \beta'\}.
\end{displaymath}
Since $ \#\{ \beta \neq \beta'\} < \frac{p-1}{4}$, we have that:
\begin{displaymath}
     | \#\{\beta = \beta' \} - c | < \frac{3}{2},
\end{displaymath}
and since $ | \#\{\beta = \beta' \} - c |$ is an integer, we have
\begin{displaymath}
     | \#\{\beta = \beta' \} - c | \leq 1.
\end{displaymath}
Let $\Gamma_+ \subset \mathbb F_p$ the set of elements of $\F_p$ such that   $(\frac{\beta'-\gamma}p) = 1$, and
\begin{displaymath}
    \sum_{\alpha \in A} \bigg(\frac{\alpha-\gamma}p \bigg) = \sum_{\beta \in B} \bigg(\frac{\beta-\gamma}p \bigg) + c.
\end{displaymath}
Now since $\#\{ \beta \neq \beta'\} < \#\{ \beta = \beta'\} $, for the elements of $\Gamma_+$ we have
\begin{displaymath}
    \sum_{\beta \in B} \bigg(\frac{\beta-\gamma}p \bigg) > 0, 
\end{displaymath}
so 
\begin{displaymath}
    \sum_{\beta \in B} \bigg(\frac{\beta-\gamma}p \bigg) + c > \frac{p-1}{4}.
\end{displaymath}
There is at most one $\gamma$ value, such that
\begin{displaymath}
    \bigg| \sum_{\alpha \in A} \bigg( \frac{\alpha-\gamma}{p}\bigg) \bigg| > \frac{p-1}{4}.
\end{displaymath}
By combining these two observations we obtain that $|\Gamma_+| \leq 1 $. 

We have seen that if $\left(\frac{b'-\gamma}{p} \right)=-1$, then $r=c$.
Thus $ |\Gamma_+| = \frac{p-1}2 -c $, so $c \geq \frac{p-3}2$. Since $|\#\{ \beta = \beta'\} - c | \leq 1 $, we have that $\# \{\beta = \beta'\} \geq \frac{p-5}2$. If $\gamma = \beta'$ then
\begin{displaymath}
    \bigg| \sum_{\beta \in B} \bigg(\frac{\beta-\beta'}p \bigg) \bigg| \leq 2.
\end{displaymath}
so
\begin{displaymath}
     \bigg| \sum_{\alpha \in A} \bigg( \frac{\alpha-\beta'}p\bigg) \bigg| \ge \frac{p-9}{2} > \frac{p-1}4.
\end{displaymath}
We have seen that there is at most one $\gamma$ value with this property and $\beta' \not\in \Gamma_+$ so 
 we have $|\Gamma_+| = 0$. Using again $|\Gamma_+|=\dd-c$ we obtain $c = \frac{p-1}2$.
\end{proof}

\medskip

\subsection{Unicity}
In this section we prove that  the leading coefficient of the polynomial of range sum $p$ of degree $\frac{p-1}{2}$ determines the polynomial itself. 

If $c = 1 $ then there is one $\gamma' \in \mathbb{F}_p$, where
\begin{displaymath}
    \sum_{\alpha \in A}\bigg( \frac{\alpha-\gamma'}p \bigg) = \sum_{\beta \in B}\bigg( \frac{\beta-\gamma'}p \bigg) + 1 - p.
\end{displaymath}
This means that $ \sum_{\alpha \in A}\left( \frac{\alpha-\gamma'}p \right) = -\sum_{\beta \in B}\left( \frac{\beta-\gamma'}p \right) = -\frac{p-1}2$, so $A = \{\alpha \in \mathbb{F}_p \colon \left( \frac{\alpha - \gamma}{p}\right) = -1 \}$. Thus
\begin{displaymath}
    f(x) = \prod_{\left(\frac{\alpha - \gamma}{p} \right) = -1} (x - \alpha)=(x+ \gamma)^{\frac{p-1}{2}}+1.
\end{displaymath}

If $c = \frac{p-1}{2}$ then we have seen that $|\#\{ \beta = \beta'\} - c | \leq 1 $, so $\#\{ \beta = \beta'\} \geq \frac{p-3}2$.
Thus for $\gamma = \beta'$, 
\begin{displaymath}
    \sum_{\alpha \in A} \bigg( \frac{\beta' - \gamma}p\bigg) \leq -\frac{p-5}2.
\end{displaymath}
Therefore  there are at least $\frac{p-3}2$ elements $\alpha$ in $A$ for which $\alpha- \beta'$ is a quadratic nonresidue. On the other hand in the $B$ multiset the  element $\beta'$ has multiplicity at least $\frac{p-3}2$. This means that there are at least $\frac{p-5}2$ elements  of $B$ for which $ \left( \frac{\beta-\beta'}p \right) = 1 $ and $f(x) = 1 $. Let us define the following polynomial:
\begin{displaymath}
    g(x) = \frac{p-1}2 (x^{\frac{p-1}2} + 1)
\end{displaymath}
The degree of the polynomial $f(x)- g(x)$   is at most $\frac{p-1}2$ but it has at least $\frac{p-5}{2} + \frac{p-5}{2}$ roots which means $f(x)= g(x) $.

\end{document}